\newtheorem{theorem}{Theorem}
\theoremstyle{plain}
\newtheorem{corollary}{Corollary}
\newtheorem{definition}{Definition}
\newtheorem{example}{Example}
\newtheorem{lemma}{Lemma}
\newtheorem{proposition}{Proposition}
\numberwithin{equation}{section}
\begin{document}
\title[Hadamard's type inequalities]{On new approach Hadamard-type
inequalities for $s$-geometrically convex functions}
\author{MEVL\"{U}T TUN\c{C}$^{\square }$}
\address{$^{\square }$Department of Mathematics, Faculty of Science and
Arts, Kilis 7 Aral\i k University, Kilis, 79000, Turkey.}
\email{$^{\square }$mevluttunc@kilis.edu.tr}
\author{\.{I}BRAH\.{I}M KARABAYIR$^{\triangle }$}
\address{$^{\triangle }$The Institute for Graduate Studies in Sciences and
Engineering, Kilis 7 Aral\i k University, Kilis, 79000, Turkey.}
\email{$^{\triangle }$ikarabayir@kilis.edu.tr}
\date{November 15, 2012}
\subjclass[2000]{26A15, 26A51, 26D10}
\keywords{$s$-geometrically convex, geometrically convex, Hadamard's
inequality, H\"{o}lder's inequality, power mean inequality, means}
\thanks{This paper is in final form and no version of it will be submitted
for publication elsewhere.\\
$^{\square }$Corresponding Author}

\begin{abstract}
In this paper we achieve some new Hadamard type inequalities using
elementary well known inequalities for functions whose first derivatives
absolute values are $s$-geometrically and geometrically convex. And also we
get some applications for special means for positive numbers.
\end{abstract}

\maketitle

\section{INTRODUCTION}

Let \ $f:I\subseteq 
\mathbb{R}
\rightarrow 
\mathbb{R}
$ be a convex mapping defined on the interval $I$\ of real numbers and $%
a,b\in I,$ with $a<b.$ The following double inequalities:%
\begin{equation*}
f\left( \frac{a+b}{2}\right) \leq \int_{a}^{b}f\left( x\right) dx\leq \frac{%
f\left( a\right) +f\left( b\right) }{2}
\end{equation*}%
hold. This double inequality is known in the literature as the
Hermite-Hadamard inequality for convex functions.

In recent years many authors established several inequalities connected to
this fact. For recent results, refinements, counterparts, generalizations
and new Hermite-Hadamard-type inequalities see \cite{hud}-\cite{pec}.

In this section we will present definitions and some results used in this
paper.

\begin{definition}
Let $I$ be an interval in $%
\mathbb{R}
.$ Then $f:I\rightarrow 
\mathbb{R}
,\emptyset \neq I\subseteq 
\mathbb{R}
$\ is said to be convex if 
\begin{equation}
f\left( tx+\left( 1-t\right) y\right) \leq tf\left( x\right) +\left(
1-t\right) f\left( y\right) .  \label{7}
\end{equation}%
for all $x,y\in I$ and $t\in \left[ 0,1\right] .$
\end{definition}

\begin{definition}
\cite{hud}\textit{\ Let }$s\in \left( 0,1\right] .$\textit{\ A function }$%
f:I\subset 
\mathbb{R}
_{0}=\left[ 0,\infty \right) \rightarrow 
\mathbb{R}
_{0}$\textit{\ is said to be }$s$-\textit{convex in the second sense if \ \
\ \ \ \ \ \ \ \ \ \ }%
\begin{equation}
f\left( tx+\left( 1-t\right) y\right) \leq t^{s}f\left( x\right) +\left(
1-t\right) ^{s}f\left( y\right)  \label{2}
\end{equation}%
\textit{for all }$x,y\in I$\textit{\ and }$t\in \left[ 0,1\right] $\textit{.}
\end{definition}

It can be easily checked for $s=1$, $s$-convexity reduces to the ordinary
convexity of functions defined on $\left[ 0,\infty \right) $.

Recently, in \cite{zh}, the concept of geometrically and $s$-geometrically
convex functions was introduced as follows.

\begin{definition}
\cite{zh} A function $f:I\subset 
\mathbb{R}
_{+}=\left( 0,\infty \right) \rightarrow 
\mathbb{R}
_{+}$\textit{\ }is said to be a geometrically convex function if%
\begin{equation}
f\left( x^{t}y^{1-t}\right) \leq \left[ f\left( x\right) \right] ^{t}\left[
f\left( y\right) \right] ^{1-t}  \label{3}
\end{equation}%
for \textit{all }$x,y\in I$\textit{\ and }$t\in \left[ 0,1\right] $\textit{.}
\end{definition}

\begin{definition}
\cite{zh} A function $f:I\subset 
\mathbb{R}
_{+}\rightarrow 
\mathbb{R}
_{+}$\textit{\ }is said to be a $s$-geometrically convex function if%
\begin{equation}
f\left( x^{t}y^{1-t}\right) \leq \left[ f\left( x\right) \right] ^{t^{s}}%
\left[ f\left( y\right) \right] ^{\left( 1-t\right) ^{s}}  \label{4}
\end{equation}%
for some $s\in \left( 0,1\right] $, where $x,y\in I$\textit{\ and }$t\in %
\left[ 0,1\right] $\textit{.}
\end{definition}

If $s=1$, the $s$-geometrically convex function becomes a geometrically
convex function on $%
\mathbb{R}
_{+}$.

\begin{example}
\cite{zh} Let $f\left( x\right) =x^{s}/s,$ $x\in \left( 0,1\right] , $ $%
0<s<1,$ $q\geq 1,$ and then the function%
\begin{equation}
\left\vert f^{\prime }\left( x\right) \right\vert ^{q}=x^{\left( s-1\right)
q}  \label{5}
\end{equation}%
is monotonically decreasing on $\left( 0,1\right] $. For $t\in \left[ 0,1%
\right] $, we have%
\begin{equation}
\left( s-1\right) q\left( t^{s}-t\right) \leq 0,\text{ \ \ }\left(
s-1\right) q\left( \left( 1-t\right) ^{s}-\left( 1-t\right) \right) \leq 0.
\label{6}
\end{equation}%
Hence, $\left\vert f^{\prime }\left( x\right) \right\vert ^{q}$ is $s$%
-geometrically convex on $\left( 0,1\right] $ for $0<s<1$.
\end{example}

\section{Hadamard's type inequalities}

In order to prove our main theorems, we need the following lemma \cite{lmmm}.

\begin{lemma}
\label{l1}\cite{lmmm} Let $f:\ I\subset 
\mathbb{R}
\rightarrow 
\mathbb{R}
$ be a differentiable mapping on $I^{\circ }$\textit{\ where }$a,b\in $ $I$
with $a$ $<$ $b.$If $f^{\prime }$ $\in L\left[ a,b\right] ,$ then the
following equality holds: 
\begin{eqnarray}
&&\frac{f\left( a\right) +f\left( b\right) }{2}-\frac{1}{b-a}%
\int_{a}^{b}f\left( x\right) dx  \label{a} \\
&=&\frac{b-a}{4}\left[ \int_{0}^{1}\left( -t\right) f^{\prime }\left( \frac{%
1+t}{2}a+\frac{1-t}{2}b\right) dt+\int_{0}^{1}tf^{\prime }\left( \frac{1+t}{2%
}b+\frac{1-t}{2}a\right) dt\right] .  \notag
\end{eqnarray}
\end{lemma}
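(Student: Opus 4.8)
The plan is to verify the identity by working from the right-hand side: apply integration by parts to each of the two integrals, and then undo everything with a linear change of variables. Write $I_1 = \int_0^1 (-t)\,f'\!\left(\tfrac{1+t}{2}a+\tfrac{1-t}{2}b\right)dt$ and $I_2 = \int_0^1 t\,f'\!\left(\tfrac{1+t}{2}b+\tfrac{1-t}{2}a\right)dt$, so that it suffices to show $\tfrac{b-a}{4}\,(I_1+I_2)$ equals $\tfrac{f(a)+f(b)}{2}-\tfrac{1}{b-a}\int_a^b f(x)\,dx$.

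First I would treat $I_1$. The inner argument $\tfrac{1+t}{2}a+\tfrac{1-t}{2}b$ has $t$-derivative $\tfrac{a-b}{2}$, so an antiderivative of $f'\!\left(\tfrac{1+t}{2}a+\tfrac{1-t}{2}b\right)$ is $\tfrac{2}{a-b}f\!\left(\tfrac{1+t}{2}a+\tfrac{1-t}{2}b\right)$. Integrating by parts with $u=-t$ produces a boundary term that contributes $\tfrac{2}{b-a}f(a)$ (the $t=0$ endpoint vanishes) together with a leftover integral $\tfrac{2}{a-b}\int_0^1 f\!\left(\tfrac{1+t}{2}a+\tfrac{1-t}{2}b\right)dt$. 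Substituting $x=\tfrac{1+t}{2}a+\tfrac{1-t}{2}b$ (which sends $t=0,1$ to $x=\tfrac{a+b}{2},a$) converts this leftover into $-\tfrac{4}{(b-a)^2}\int_a^{(a+b)/2}f(x)\,dx$, giving $I_1 = \tfrac{2}{b-a}f(a)-\tfrac{4}{(b-a)^2}\int_a^{(a+b)/2}f(x)\,dx$. The computation for $I_2$ is the mirror image: the inner argument now has $t$-derivative $\tfrac{b-a}{2}$, and integrating by parts with $u=t$, followed by the substitution $x=\tfrac{1+t}{2}b+\tfrac{1-t}{2}a$, yields $I_2 = \tfrac{2}{b-a}f(b)-\tfrac{4}{(b-a)^2}\int_{(a+b)/2}^b f(x)\,dx$.

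Finally I would add $I_1$ and $I_2$; the two partial integrals combine by additivity of the integral over $[a,\tfrac{a+b}{2}]\cup[\tfrac{a+b}{2},b]$ into $\int_a^b f(x)\,dx$, so $I_1+I_2 = \tfrac{2}{b-a}\bigl(f(a)+f(b)\bigr)-\tfrac{4}{(b-a)^2}\int_a^b f(x)\,dx$, and multiplying through by $\tfrac{b-a}{4}$ gives exactly the claimed identity. The only place requiring genuine care — and hence the ``main obstacle'' — is the bookkeeping of signs and of the orientation of the limits after each substitution (in $I_1$ the substitution reverses orientation, turning $\int_0^1 dt$ into $\int_{(a+b)/2}^a dx$), since a single sign slip collapses the whole identity; the hypotheses that $f$ is differentiable on $I^{\circ}$ and $f'\in L[a,b]$ are precisely what legitimize the integration by parts and the changes of variable.
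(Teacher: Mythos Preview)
Your argument is correct and follows exactly the route the paper indicates: the paper does not give a detailed proof but simply remarks that the identity ``can be also done integrating by parts in the right hand side,'' leaving the details to the reader. Your integration by parts on each of $I_1$ and $I_2$, followed by the linear substitutions back to $x$, is precisely that computation carried out in full, with all signs and orientations handled correctly.
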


A simple proof of this equality can be also done integrating by parts in the
right hand side. The details are left to the interested reader.

The next theorems gives a new result of the upper Hermite-Hadamard
inequality for $s$-geometrically convex functions.

In the following part of the paper;%
\begin{equation}
\alpha \left( u,v\right) =\left\vert f^{\prime }\left( a\right) \right\vert
^{u}\left\vert f^{\prime }\left( b\right) \right\vert ^{-v},\text{ \ }%
u,v\geq 0,  \label{111}
\end{equation}%
\begin{equation}
g_{1}\left( \alpha \right) =\left\{ 
\begin{array}{cc}
\frac{1}{2} & \alpha =1 \\ 
\frac{\alpha \ln \alpha -\alpha +1}{\left( \ln \alpha \right) ^{2}} & \alpha
\neq 1%
\end{array}%
\right.  \label{112}
\end{equation}%
and%
\begin{equation}
g_{2}\left( \alpha \right) =\left\{ 
\begin{array}{cc}
1 & \alpha =1 \\ 
\frac{\alpha -1}{\ln \alpha } & \alpha \neq 1%
\end{array}%
\right.  \label{113}
\end{equation}

\begin{theorem}
\label{t1}Let $f:I\subset 
\mathbb{R}
_{+}\rightarrow 
\mathbb{R}
_{+}$ be differentiable mapping on $I^{\circ },$ $a,b\in I$ with $a<b$ and $%
f^{\prime }$ is integrable on $\left[ a,b\right] .$ If $\ \left\vert
f^{\prime }\right\vert $ is $s$-geometrically convex and monotonically
decreasing on $\left[ a,b\right] ,$ and $s\in \left( 0,1\right] $ then the
following inequality holds:%
\begin{eqnarray}
&&\left\vert \frac{f\left( a\right) +f\left( b\right) }{2}-\frac{1}{b-a}%
\int_{a}^{b}f\left( x\right) dx\right\vert  \label{b} \\
&\leq &\frac{\left( b-a\right) }{4}\left\vert f^{\prime }\left( a\right)
f^{\prime }\left( b\right) \right\vert ^{\frac{s}{2}}\left( g_{1}\left(
\alpha \left( \frac{s}{2},\frac{s}{2}\right) \right) +g_{1}\left( \alpha
\left( \frac{-s}{2},\frac{-s}{2}\right) \right) \right) .  \notag
\end{eqnarray}
\end{theorem}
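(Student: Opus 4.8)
The plan is to feed the identity of Lemma~\ref{l1} into the triangle inequality, bound each of the two resulting integrals by a short chain of elementary inequalities (weighted AM--GM, the monotonicity of $|f^{\prime}|$, and $s$-geometric convexity), and then recognise the two elementary integrals that remain as the functions $g_{1}$.

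First, applying Lemma~\ref{l1} and then the triangle inequality, the left-hand side of (\ref{b}) is at most
\[
\frac{b-a}{4}\left[\int_{0}^{1}t\left|f^{\prime}\!\left(\tfrac{1+t}{2}a+\tfrac{1-t}{2}b\right)\right|dt+\int_{0}^{1}t\left|f^{\prime}\!\left(\tfrac{1+t}{2}b+\tfrac{1-t}{2}a\right)\right|dt\right].
\]
The two integrals are entirely analogous, so I would focus on the first.

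Next I would estimate the first integrand. Since $a,b>0$, the weighted arithmetic--geometric mean inequality gives $\tfrac{1+t}{2}a+\tfrac{1-t}{2}b\ge a^{(1+t)/2}b^{(1-t)/2}$, and because $|f^{\prime}|$ is monotonically decreasing this yields $\bigl|f^{\prime}(\tfrac{1+t}{2}a+\tfrac{1-t}{2}b)\bigr|\le\bigl|f^{\prime}\bigl(a^{(1+t)/2}b^{(1-t)/2}\bigr)\bigr|$. Applying the definition of $s$-geometric convexity (\ref{4}) with $x=a$, $y=b$ and with $\tfrac{1+t}{2}$ playing the role of the variable in (\ref{4}) gives $\bigl|f^{\prime}(a^{(1+t)/2}b^{(1-t)/2})\bigr|\le|f^{\prime}(a)|^{((1+t)/2)^{s}}|f^{\prime}(b)|^{((1-t)/2)^{s}}$. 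It then remains to replace the exponents $((1\pm t)/2)^{s}$ by $s(1\pm t)/2$, i.e.\ to bound the last expression by
\[
|f^{\prime}(a)|^{s(1+t)/2}|f^{\prime}(b)|^{s(1-t)/2}=\bigl|f^{\prime}(a)f^{\prime}(b)\bigr|^{s/2}\,\alpha\!\left(\tfrac{s}{2},\tfrac{s}{2}\right)^{t},
\]
which is where the assumptions $0<s\le1$ and $0\le t\le1$ enter jointly with the sign/monotonicity information on $|f^{\prime}|$, in the spirit of the elementary inequalities (\ref{6}) of the Example. I expect this reduction of the exponents to be the main technical obstacle, since $((1+t)/2)^{s}\ge(1+t)/2\ge s(1+t)/2$ and one must make the passage in the direction dictated by the location of $|f^{\prime}|$ relative to $1$.

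Finally, granting this, the first integral is bounded by $|f^{\prime}(a)f^{\prime}(b)|^{s/2}\int_{0}^{1}t\,\alpha(\tfrac{s}{2},\tfrac{s}{2})^{t}\,dt$, and a single integration by parts shows $\int_{0}^{1}t\,\beta^{t}\,dt=g_{1}(\beta)$ for every $\beta>0$ — this is precisely the closed form (\ref{112}). Handling the second integral in the same way, but now using (\ref{4}) with the value $\tfrac{1-t}{2}$ (so that the ratio that appears is $\alpha(-\tfrac{s}{2},-\tfrac{s}{2})$), it is bounded by $|f^{\prime}(a)f^{\prime}(b)|^{s/2}g_{1}(\alpha(-\tfrac{s}{2},-\tfrac{s}{2}))$. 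Adding the two estimates and multiplying by $\tfrac{b-a}{4}$ yields exactly (\ref{b}).
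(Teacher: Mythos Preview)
Your proposal follows the paper's proof essentially step for step: Lemma~\ref{l1} plus the triangle inequality, then AM--GM combined with the monotonicity of $|f'|$ to pass to geometric means, then $s$-geometric convexity, then the reduction of the exponents $((1\pm t)/2)^{s}$ to $s(1\pm t)/2$, and finally the evaluation $\int_{0}^{1}t\beta^{t}\,dt=g_{1}(\beta)$. You have also correctly located the one genuinely delicate point --- the exponent reduction --- and correctly diagnosed that it hinges on whether $|f'(a)|,|f'(b)|$ lie in $(0,1]$; the paper handles this by invoking the elementary inequality $k^{m^{n}}\le k^{mn}$ for $0<k,m,n\le1$ (its display~(\ref{1})) and splitting into the cases $|f'(a)|=|f'(b)|=1$ and $0<|f'(a)|,|f'(b)|<1$, tacitly assuming these exhaust the possibilities.
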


\begin{proof}
Since $\left\vert f^{\prime }\right\vert $ is a $s$-geometrically convex and
monotonically decreasing on $\left[ a,b\right] ,$ from Lemma \ref{l1}, we get%
\begin{eqnarray*}
&&\left\vert \frac{f\left( a\right) +f\left( b\right) }{2}-\frac{1}{b-a}%
\int_{a}^{b}f\left( x\right) dx\right\vert \\
&\leq &\frac{b-a}{4}\left[ \int_{0}^{1}\left\vert -t\right\vert \left\vert
f^{\prime }\left( \frac{1+t}{2}a+\frac{1-t}{2}b\right) \right\vert
dt+\int_{0}^{1}\left\vert t\right\vert \left\vert f^{\prime }\left( \frac{1+t%
}{2}b+\frac{1-t}{2}a\right) \right\vert dt\right] \\
&\leq &\frac{b-a}{4}\left[ \int_{0}^{1}\left\vert -t\right\vert \left\vert
f^{\prime }\left( a^{\frac{1+t}{2}}b^{\frac{1-t}{2}}\right) \right\vert
dt+\int_{0}^{1}\left\vert t\right\vert \left\vert f^{\prime }\left( b^{\frac{%
1+t}{2}}a^{\frac{1-t}{2}}\right) \right\vert dt\right] \\
&\leq &\frac{b-a}{4}\left[ \int_{0}^{1}\left\vert -t\right\vert \left\vert
f^{\prime }\left( a\right) \right\vert ^{\left( \frac{1+t}{2}\right)
^{s}}\left\vert f^{\prime }\left( b\right) \right\vert ^{\left( \frac{1-t}{2}%
\right) ^{s}}dt+\int_{0}^{1}\left\vert t\right\vert \left\vert f^{\prime
}\left( b\right) \right\vert ^{\left( \frac{1+t}{2}\right) ^{s}}\left\vert
f^{\prime }\left( a\right) \right\vert ^{\left( \frac{1-t}{2}\right) ^{s}}dt%
\right]
\end{eqnarray*}%
If $\ 0<k\leq 1,$ $0<m,n\leq 1,\ \ \ $%
\begin{equation}
k^{m^{n}}\leq k^{mn}  \label{1}
\end{equation}%
When $\left\vert f^{\prime }\left( a\right) \right\vert =\left\vert
f^{\prime }\left( b\right) \right\vert =1,$ by( \ref{1}), we get%
\begin{equation*}
\left\vert \frac{f\left( a\right) +f\left( b\right) }{2}-\frac{1}{b-a}%
\int_{a}^{b}f\left( x\right) dx\right\vert \leq \frac{b-a}{4}
\end{equation*}%
When $0<\left\vert f^{\prime }\left( a\right) \right\vert ,\left\vert
f^{\prime }\left( b\right) \right\vert <1,$ by( \ref{1}), we get 
\begin{eqnarray*}
&&\left\vert \frac{f\left( a\right) +f\left( b\right) }{2}-\frac{1}{b-a}%
\int_{a}^{b}f\left( x\right) dx\right\vert \\
&\leq &\frac{b-a}{4}\left[ \int_{0}^{1}\left\vert -t\right\vert \left\vert
f^{\prime }\left( a\right) \right\vert ^{s\left( \frac{1+t}{2}\right)
}\left\vert f^{\prime }\left( b\right) \right\vert ^{s\left( \frac{1-t}{2}%
\right) }dt+\int_{0}^{1}\left\vert t\right\vert \left\vert f^{\prime }\left(
b\right) \right\vert ^{s\left( \frac{1+t}{2}\right) }\left\vert f^{\prime
}\left( a\right) \right\vert ^{s\left( \frac{1-t}{2}\right) }dt\right] \\
&=&\frac{b-a}{4}\left\vert f^{\prime }\left( a\right) f^{\prime }\left(
b\right) \right\vert ^{\frac{s}{2}}\left[ \int_{0}^{1}\left\vert
-t\right\vert \left\vert \frac{f^{\prime }\left( a\right) }{f^{\prime
}\left( b\right) }\right\vert ^{\frac{st}{2}}dt+\int_{0}^{1}\left\vert
t\right\vert \left\vert \frac{f^{\prime }\left( b\right) }{f^{\prime }\left(
a\right) }\right\vert ^{\frac{st}{2}}dt\right] \\
&=&\frac{\left( b-a\right) }{4}\left\vert f^{\prime }\left( a\right)
f^{\prime }\left( b\right) \right\vert ^{\frac{s}{2}}\left( g_{1}\left(
\alpha \left( \frac{s}{2},\frac{s}{2}\right) \right) +g_{1}\left( \alpha
\left( \frac{-s}{2},\frac{-s}{2}\right) \right) \right)
\end{eqnarray*}%
which completes the proof.
\end{proof}

\begin{theorem}
\label{t2}Let $f:I\subset 
\mathbb{R}
_{+}\rightarrow 
\mathbb{R}
_{+}$ be differentiable on $I^{\circ },$ $a,b\in I$, with $a<b$ and $%
f^{\prime }\in L\left( \left[ a,b\right] \right) .$ If $\left\vert f^{\prime
}\right\vert ^{q}$ is $s$-geometrically convex and monotonically decreasing
on $\left[ a,b\right] $ for $p,q>1$ and $s\in \left( 0,1\right] ,$ then%
\begin{eqnarray}
&&  \label{g} \\
&&\left\vert \frac{f\left( a\right) +f\left( b\right) }{2}-\frac{1}{b-a}%
\int_{a}^{b}f\left( x\right) dx\right\vert  \notag \\
&\leq &\frac{\left( b-a\right) }{4\left( p+1\right) ^{\frac{1}{p}}}%
\left\vert f^{\prime }\left( a\right) f^{\prime }\left( b\right) \right\vert
^{\frac{s}{2}}\left\{ \left[ g_{2}\left( \alpha \left( \frac{sq}{2},\frac{sq%
}{2}\right) \right) \right] ^{\frac{1}{q}}+\left[ g_{2}\left( \alpha \left( 
\frac{-sq}{2},\frac{-sq}{2}\right) \right) \right] ^{\frac{1}{q}}\right\} , 
\notag
\end{eqnarray}%
where $\frac{1}{p}+\frac{1}{q}=1.$
\end{theorem}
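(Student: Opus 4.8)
The plan is to mirror the proof of Theorem~\ref{t1}, but insert H\"older's inequality at the point where Theorem~\ref{t1} simply estimated $\int_0^1 |t|\,(\cdots)\,dt$ directly. Starting from Lemma~\ref{l1} and taking absolute values, I would write
\begin{eqnarray*}
&&\left\vert \frac{f(a)+f(b)}{2}-\frac{1}{b-a}\int_a^b f(x)\,dx\right\vert \\
&\leq& \frac{b-a}{4}\left[\int_0^1 |t|\,\left\vert f'\!\left(\tfrac{1+t}{2}a+\tfrac{1-t}{2}b\right)\right\vert dt+\int_0^1 |t|\,\left\vert f'\!\left(\tfrac{1+t}{2}b+\tfrac{1-t}{2}a\right)\right\vert dt\right],
\end{eqnarray*}
and then apply H\"older's inequality to each of the two integrals with exponents $p$ and $q$, pulling $|t|$ into the first factor: $\int_0^1 |t|\,|f'(\cdot)|\,dt \le \left(\int_0^1 t^p\,dt\right)^{1/p}\left(\int_0^1 |f'(\cdot)|^q\,dt\right)^{1/q}$. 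The first factor is exactly $(p+1)^{-1/p}$, which accounts for that denominator in the claimed bound.

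Next I would handle the two remaining integrals $\int_0^1 |f'(\cdot)|^q\,dt$ using the $s$-geometric convexity of $|f'|^q$. Since $\tfrac{1+t}{2}a+\tfrac{1-t}{2}b \ge a^{(1+t)/2}b^{(1-t)/2}$ (AM--GM / weighted means) and $|f'|$ is monotonically decreasing, $\left\vert f'\!\left(\tfrac{1+t}{2}a+\tfrac{1-t}{2}b\right)\right\vert^q \le \left\vert f'\!\left(a^{(1+t)/2}b^{(1-t)/2}\right)\right\vert^q \le |f'(a)|^{q((1+t)/2)^s}|f'(b)|^{q((1-t)/2)^s}$, and symmetrically for the second integral. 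Then I would split into the three cases exactly as in Theorem~\ref{t1}: when $|f'(a)|=|f'(b)|=1$ the bound is immediate; when $0<|f'(a)|,|f'(b)|<1$ I would use inequality~(\ref{1}), i.e.\ $k^{m^n}\le k^{mn}$ with $k=|f'(a)|$ or $|f'(b)|$ in $(0,1]$ and $m=\tfrac{1+t}{2}$ or $\tfrac{1-t}{2}$, $n=s$, to replace the exponents $((1\pm t)/2)^s$ by $s(1\pm t)/2$. This yields $|f'(a)|^{qs(1+t)/2}|f'(b)|^{qs(1-t)/2} = |f'(a)f'(b)|^{qs/2}\,|f'(a)/f'(b)|^{qst/2}$, and the $q$-th root pulls out $|f'(a)f'(b)|^{s/2}$ as a common factor matching the statement.

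The last computational step is to evaluate $\int_0^1 |f'(a)/f'(b)|^{qst/2}\,dt$ and $\int_0^1 |f'(b)/f'(a)|^{qst/2}\,dt$. Writing $\alpha=\alpha(\tfrac{sq}{2},\tfrac{sq}{2})=|f'(a)|^{sq/2}|f'(b)|^{-sq/2}$, the first integral is $\int_0^1 \alpha^t\,dt$, which equals $1$ if $\alpha=1$ and $\tfrac{\alpha-1}{\ln\alpha}$ otherwise --- precisely $g_2(\alpha)$ as defined in~(\ref{113}); the second integral gives $g_2(\alpha(\tfrac{-sq}{2},\tfrac{-sq}{2}))$. Taking $q$-th roots and reassembling gives exactly the right-hand side of~(\ref{g}). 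I expect the only real subtlety to be bookkeeping: being careful that H\"older is applied \emph{before} the convexity estimate (so that the exponent $q$ lands inside the integral where $|f'|^q$ is the convex object), keeping track of the three cases for $|f'(a)|,|f'(b)|$, and correctly identifying $\int_0^1\alpha^t dt$ with $g_2(\alpha)$; none of these is hard, but the argument must be organized so the factor $(p+1)^{-1/p}$, the common factor $|f'(a)f'(b)|^{s/2}$, and the two $g_2$ terms all appear cleanly.
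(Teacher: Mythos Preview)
Your proposal is correct and follows essentially the same route as the paper's proof: Lemma~\ref{l1}, then H\"older with the split $|t|\cdot|f'(\cdot)|$, then AM--GM plus monotonicity to pass to $a^{(1+t)/2}b^{(1-t)/2}$, then $s$-geometric convexity of $|f'|^q$, then inequality~(\ref{1}) in the case $0<|f'(a)|,|f'(b)|<1$, and finally the identification $\int_0^1\alpha^t\,dt=g_2(\alpha)$. The only cosmetic difference is that the paper merges the H\"older step and the AM--GM/monotonicity step into a single displayed inequality, and note that the paper (like Theorem~\ref{t1}) treats only \emph{two} cases, not three.
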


\begin{proof}
Since $\left\vert f^{\prime }\right\vert ^{q}$ is a $s$-geometrically convex
and monotonically decreasing on $\left[ a,b\right] $, from Lemma \ref{l1}
and the well known H\"{o}lder inequality, we have%
\begin{eqnarray*}
&&\left\vert \frac{f\left( a\right) +f\left( b\right) }{2}-\frac{1}{b-a}%
\int_{a}^{b}f\left( x\right) dx\right\vert \\
&\leq &\frac{b-a}{4}\left[ \int_{0}^{1}\left\vert -t\right\vert \left\vert
f^{\prime }\left( \frac{1+t}{2}a+\frac{1-t}{2}b\right) \right\vert
dt+\int_{0}^{1}\left\vert t\right\vert \left\vert f^{\prime }\left( \frac{1+t%
}{2}b+\frac{1-t}{2}a\right) \right\vert dt\right] \\
&\leq &\frac{b-a}{4}\left\{ \left[ \int_{0}^{1}t^{p}dt\right] ^{\frac{1}{p}}%
\left[ \int_{0}^{1}\left\vert f^{\prime }\left( a^{\frac{1+t}{2}}b^{\frac{1-t%
}{2}}\right) \right\vert ^{q}dt\right] ^{\frac{1}{q}}\right. \\
&&+\left. \left[ \int_{0}^{1}t^{p}dt\right] ^{\frac{1}{p}}\left[
\int_{0}^{1}\left\vert f^{\prime }\left( b^{\frac{1+t}{2}}a^{\frac{1-t}{2}%
}\right) \right\vert ^{q}dt\right] ^{\frac{1}{q}}\right\} \\
&=&\frac{b-a}{4\left( p+1\right) ^{\frac{1}{p}}}\left\{ \left[
\int_{0}^{1}\left\vert f^{\prime }\left( a^{\frac{1+t}{2}}b^{\frac{1-t}{2}%
}\right) \right\vert ^{q}dt\right] ^{\frac{1}{q}}+\left[ \int_{0}^{1}\left%
\vert f^{\prime }\left( b^{\frac{1+t}{2}}a^{\frac{1-t}{2}}\right)
\right\vert ^{q}dt\right] ^{\frac{1}{q}}\right\} \\
&\leq &\frac{b-a}{4\left( p+1\right) ^{\frac{1}{p}}}\left\{ \left[
\int_{0}^{1}\left( \left\vert f^{\prime }\left( a\right) \right\vert
^{\left( \frac{1+t}{2}\right) ^{s}}\left\vert f^{\prime }\left( b\right)
\right\vert ^{\left( \frac{1-t}{2}\right) ^{s}}\right) ^{q}dt\right] ^{\frac{%
1}{q}}\right. \\
&&+\left. \left[ \int_{0}^{1}\left( \left\vert f^{\prime }\left( b\right)
\right\vert ^{\left( \frac{1+t}{2}\right) ^{s}}\left\vert f^{\prime }\left(
a\right) \right\vert ^{\left( \frac{1-t}{2}\right) ^{s}}\right) ^{q}dt\right]
^{\frac{1}{q}}\right\}
\end{eqnarray*}%
When $\left\vert f^{\prime }\left( a\right) \right\vert =\left\vert
f^{\prime }\left( b\right) \right\vert =1,$ by( \ref{1}), we get%
\begin{equation*}
\left\vert \frac{f\left( a\right) +f\left( b\right) }{2}-\frac{1}{b-a}%
\int_{a}^{b}f\left( x\right) dx\right\vert \leq \frac{b-a}{2\left(
p+1\right) ^{\frac{1}{p}}}
\end{equation*}%
When $0<\left\vert f^{\prime }\left( a\right) \right\vert ,\left\vert
f^{\prime }\left( b\right) \right\vert <1,$ by( \ref{1}), we get

\begin{eqnarray*}
&\leq &\frac{b-a}{4\left( p+1\right) ^{\frac{1}{p}}}\left\{ \left[
\int_{0}^{1}\left\vert f^{\prime }\left( a\right) \right\vert ^{sq\left( 
\frac{1+t}{2}\right) }\left\vert f^{\prime }\left( b\right) \right\vert
^{sq\left( \frac{1-t}{2}\right) }dt\right] ^{\frac{1}{q}}\right. \\
&&+\left. \left[ \int_{0}^{1}\left\vert f^{\prime }\left( b\right)
\right\vert ^{sq\left( \frac{1+t}{2}\right) }\left\vert f^{\prime }\left(
a\right) \right\vert ^{sq\left( \frac{1-t}{2}\right) }dt\right] ^{\frac{1}{q}%
}\right\} \\
&=&\frac{\left( b-a\right) }{4\left( p+1\right) ^{\frac{1}{p}}}\left\vert
f^{\prime }\left( a\right) f^{\prime }\left( b\right) \right\vert ^{\frac{s}{%
2}}\left\{ \left( \int_{0}^{1}\left\vert \frac{f^{\prime }\left( a\right) }{%
f^{\prime }\left( b\right) }\right\vert ^{\frac{sq}{2}t}dt\right) ^{\frac{1}{%
q}}+\left( \int_{0}^{1}\left\vert \frac{f^{\prime }\left( b\right) }{%
f^{\prime }\left( a\right) }\right\vert ^{\frac{sq}{2}t}dt\right) ^{\frac{1}{%
q}}\right\} \\
&=&\frac{\left( b-a\right) }{4\left( p+1\right) ^{\frac{1}{p}}}\left\vert
f^{\prime }\left( a\right) f^{\prime }\left( b\right) \right\vert ^{\frac{s}{%
2}}\left\{ \left[ g_{2}\left( \alpha \left( \frac{sq}{2},\frac{sq}{2}\right)
\right) \right] ^{\frac{1}{q}}+\left[ g_{2}\left( \alpha \left( \frac{-sq}{2}%
,\frac{-sq}{2}\right) \right) \right] ^{\frac{1}{q}}\right\}
\end{eqnarray*}%
which completes the proof.
\end{proof}

\begin{corollary}
Let $f:I\subseteq \left( 0,\infty \right) \rightarrow \left( 0,\infty
\right) $ be differentiable on $I^{\circ },$ $a,b\in I$ with $a<b,$ and $%
f^{\prime }\in L\left( \left[ a,b\right] \right) .$ If $\left\vert f^{\prime
}\right\vert ^{q}$ is $s$-geometrically convex and monotonically decreasing
on $\left[ a,b\right] $ for $p,q>1$ and $s\in \left( 0,1\right] ,$ then

i) When $p=q=2$, one has%
\begin{eqnarray*}
&&\left\vert \frac{f\left( a\right) +f\left( b\right) }{2}-\frac{1}{b-a}%
\int_{a}^{b}f\left( x\right) dx\right\vert \\
&\leq &\frac{\left( b-a\right) }{4\sqrt{3}}\left\vert f^{\prime }\left(
a\right) f^{\prime }\left( b\right) \right\vert ^{\frac{s}{2}}\left\{ \sqrt{%
g_{2}\left( \alpha \left( s,s\right) \right) }+\sqrt{g_{2}\left( \alpha
\left( -s,-s\right) \right) }\right\}
\end{eqnarray*}

ii) If we take $s=1$ in (\ref{g}), we have for geometrically convex, one has%
\begin{eqnarray*}
&&\left\vert \frac{f\left( a\right) +f\left( b\right) }{2}-\frac{1}{b-a}%
\int_{a}^{b}f\left( x\right) dx\right\vert \\
&\leq &\frac{\left( b-a\right) }{4\left( p+1\right) ^{\frac{1}{p}}}%
\left\vert f^{\prime }\left( a\right) f^{\prime }\left( b\right) \right\vert
^{\frac{1}{2}}\left\{ \left[ g_{2}\left( \alpha \left( q,q\right) \right) %
\right] ^{\frac{1}{q}}+\left[ g_{2}\left( \alpha \left( -q,-q\right) \right) %
\right] ^{\frac{1}{q}}\right\}
\end{eqnarray*}
\end{corollary}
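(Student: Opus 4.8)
The plan is straightforward: both assertions are immediate specializations of Theorem \ref{t2}, so the ``proof'' amounts to substituting particular parameter values into inequality (\ref{g}) and simplifying the resulting constants. No new ideas are needed; the content is entirely in Theorem \ref{t2}.

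For part (i) I would set $p=q=2$. First one checks that this is an admissible choice for Theorem \ref{t2}: we need $p,q>1$ and $\frac{1}{p}+\frac{1}{q}=1$, and indeed $\frac{1}{2}+\frac{1}{2}=1$, so the hypotheses of Theorem \ref{t2} are satisfied verbatim. Then the constant $(p+1)^{\frac{1}{p}}$ becomes $3^{\frac{1}{2}}=\sqrt{3}$, the arguments $\frac{sq}{2}$ and $\frac{-sq}{2}$ of $\alpha$ collapse to $s$ and $-s$ respectively, and the outer exponent $\frac{1}{q}$ becomes $\frac{1}{2}$, so each $\left[g_{2}(\cdot)\right]^{\frac{1}{q}}$ turns into $\sqrt{g_{2}(\cdot)}$. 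Feeding these substitutions into (\ref{g}) yields exactly the displayed bound in (i).

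For part (ii) I would put $s=1$ in (\ref{g}). Here one first invokes the remark following the definition of $s$-geometrically convex functions: when $s=1$, $s$-geometric convexity coincides with geometric convexity, so the hypothesis ``$\left\vert f^{\prime}\right\vert^{q}$ is $s$-geometrically convex and monotonically decreasing'' reduces to ``$\left\vert f^{\prime}\right\vert^{q}$ is geometrically convex and monotonically decreasing,'' which is the intended setting of the statement. With $s=1$ the weight $\left\vert f^{\prime}(a)f^{\prime}(b)\right\vert^{\frac{s}{2}}$ becomes $\left\vert f^{\prime}(a)f^{\prime}(b)\right\vert^{\frac{1}{2}}$ and the arguments of $\alpha$ simplify accordingly, while the factor $(p+1)^{\frac{1}{p}}$ and the exponents $\frac{1}{p},\frac{1}{q}$ are left untouched since only $s$ is being fixed; this leaves precisely the stated inequality.

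There is essentially no obstacle: the work is purely the bookkeeping of constants. The only two points deserving attention are (a) verifying that the choice $p=q=2$ respects the conjugacy relation $\frac{1}{p}+\frac{1}{q}=1$ required by Theorem \ref{t2} in part (i), and (b) explicitly invoking the $s=1$ reduction to geometric convexity in order to restate the hypothesis correctly in part (ii).
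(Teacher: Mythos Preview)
Your proposal is correct and matches the paper's treatment: the corollary is stated without a separate proof, being an immediate specialization of Theorem \ref{t2}, and your argument is precisely the substitution-and-simplification that this entails. The two checkpoints you flag---the conjugacy $\tfrac{1}{p}+\tfrac{1}{q}=1$ for $p=q=2$ and the reduction of $s$-geometric convexity to geometric convexity at $s=1$---are exactly what is needed.
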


\begin{theorem}
\label{t3}Let $f:I\subset 
\mathbb{R}
_{+}\rightarrow 
\mathbb{R}
_{+}$ be differentiable on $I^{\circ },$ $a,b\in I$ \ with $a<b$ and $%
f^{\prime }\in L(\left[ a\text{,}b\right] )$. If $\left\vert f^{\prime
}\right\vert ^{q}$ is $s$-geometrically convex and monotonically decreasing
on $\left[ a\text{,}b\right] ,$ for $q\geq 1$ and $s\in \left( 0,1\right] ,$
then%
\begin{eqnarray}
&&  \label{xx} \\
&&\left\vert \frac{f\left( a\right) +f\left( b\right) }{2}-\frac{1}{b-a}%
\int_{a}^{b}f\left( x\right) dx\right\vert  \notag \\
&\leq &\frac{b-a}{4}\left( \frac{1}{2}\right) ^{1-\frac{1}{q}}\left\{
\left\vert \frac{f^{\prime }\left( a\right) }{f^{\prime }\left( b\right) }%
\right\vert ^{\frac{s}{2}}\left[ g_{1}\left( \alpha \left( \frac{sq}{2},%
\frac{sq}{2}\right) \right) \right] ^{\frac{1}{q}}+\left\vert \frac{%
f^{\prime }\left( b\right) }{f^{\prime }\left( a\right) }\right\vert ^{\frac{%
s}{2}}\left[ g_{1}\left( \alpha \left( \frac{-sq}{2},\frac{-sq}{2}\right)
\right) \right] ^{\frac{1}{q}}\right\} .  \notag
\end{eqnarray}
\end{theorem}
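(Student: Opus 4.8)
The plan is to follow the scheme of the proofs of Theorems~\ref{t1} and \ref{t2} almost verbatim, replacing H\"older's inequality by the power-mean inequality; this is what makes the hypothesis $q\ge 1$ natural. First I would start from the identity \eqref{a} of Lemma~\ref{l1} and the triangle inequality for integrals; since $|-t|=|t|=t$ on $[0,1]$, this gives
\[
\left\vert \frac{f(a)+f(b)}{2}-\frac{1}{b-a}\int_{a}^{b}f(x)\,dx\right\vert \le \frac{b-a}{4}\left[ \int_{0}^{1}t\,\Bigl\vert f'\bigl(\tfrac{1+t}{2}a+\tfrac{1-t}{2}b\bigr)\Bigr\vert\,dt+\int_{0}^{1}t\,\Bigl\vert f'\bigl(\tfrac{1+t}{2}b+\tfrac{1-t}{2}a\bigr)\Bigr\vert\,dt\right].
\]
To each of the two integrals I would then apply the power-mean inequality: since $\int_{0}^{1}t\,dt=\tfrac12$, one has $\int_{0}^{1}t\,|f'(\cdot)|\,dt\le\bigl(\tfrac12\bigr)^{1-1/q}\bigl(\int_{0}^{1}t\,|f'(\cdot)|^{q}\,dt\bigr)^{1/q}$. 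This is the only place where $q\ge 1$ is used, and it accounts for the prefactor $\tfrac{b-a}{4}\bigl(\tfrac12\bigr)^{1-1/q}$ in \eqref{xx}.

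Next I would estimate $\int_{0}^{1}t\,|f'(\cdot)|^{q}\,dt$ exactly as in the proof of Theorem~\ref{t1}. By AM--GM, $a^{\frac{1+t}{2}}b^{\frac{1-t}{2}}\le \tfrac{1+t}{2}a+\tfrac{1-t}{2}b$, and since $|f'|^{q}$ is monotonically decreasing this lets me pass to the geometric-mean argument, $|f'(\tfrac{1+t}{2}a+\tfrac{1-t}{2}b)|^{q}\le|f'(a^{\frac{1+t}{2}}b^{\frac{1-t}{2}})|^{q}$, and symmetrically for the second integral. Then $s$-geometric convexity of $|f'|^{q}$ gives $|f'(a^{\frac{1+t}{2}}b^{\frac{1-t}{2}})|^{q}\le|f'(a)|^{q(\frac{1+t}{2})^{s}}|f'(b)|^{q(\frac{1-t}{2})^{s}}$. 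As in Theorems~\ref{t1}--\ref{t2}, I would dispose of the trivial case $|f'(a)|=|f'(b)|=1$ separately (there the bound reduces to $\tfrac{b-a}{4}$, which equals the right-hand side of \eqref{xx}), and in the main case $0<|f'(a)|,|f'(b)|<1$ invoke \eqref{1} with $k=|f'(a)|^{q}$ or $|f'(b)|^{q}$, $m=\tfrac{1\pm t}{2}\le 1$, $n=s\le 1$, to linearise the exponents; the first integrand is then at most $|f'(a)|^{\frac{sq(1+t)}{2}}|f'(b)|^{\frac{sq(1-t)}{2}}=|f'(a)f'(b)|^{\frac{sq}{2}}\,\bigl|\tfrac{f'(a)}{f'(b)}\bigr|^{\frac{sqt}{2}}$, and similarly with $a$ and $b$ interchanged.

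It remains to evaluate. Pulling the $t$-independent constant $|f'(a)f'(b)|^{sq/2}$ out of each integral leaves $\int_{0}^{1}t\,\beta^{t}\,dt$ with $\beta=\alpha(\tfrac{sq}{2},\tfrac{sq}{2})$ in the first term and $\beta=\alpha(-\tfrac{sq}{2},-\tfrac{sq}{2})$ in the second; one integration by parts gives $\int_{0}^{1}t\,\beta^{t}\,dt=g_{1}(\beta)$, matching the definition \eqref{112}. Taking $q$-th roots of the two pieces and adding them yields the right-hand side of \eqref{xx}.

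The one delicate point is the estimate in the third paragraph: getting the direction of the AM--GM/monotonicity step right — this is precisely why $|f'|^{q}$ is assumed \emph{decreasing} rather than increasing — and handling the case distinction forced by \eqref{1}, which is valid only for bases in $(0,1]$. Everything else (the power-mean step, the identity $\int_{0}^{1}t\,\beta^{t}\,dt=g_{1}(\beta)$, and the bookkeeping of exponents) is routine, and no idea beyond those already used for Theorems~\ref{t1}--\ref{t2} is needed.
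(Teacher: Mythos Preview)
Your approach is essentially identical to the paper's: Lemma~\ref{l1}, the power-mean inequality to produce the factor $(1/2)^{1-1/q}$, AM--GM plus monotonicity to pass to the geometric-mean argument, $s$-geometric convexity, the case split and \eqref{1} to linearise the exponents, and finally the closed form $\int_0^1 t\,\beta^{t}\,dt=g_1(\beta)$.

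There is one bookkeeping slip at the very end. Factoring out the $t$-independent constant from $|f'(a)|^{sq(1+t)/2}|f'(b)|^{sq(1-t)/2}$ gives the \emph{product} $|f'(a)f'(b)|^{sq/2}$, so after taking $q$-th roots your computation produces a common prefactor $|f'(a)f'(b)|^{s/2}$, exactly as in Theorems~\ref{t1} and~\ref{t2}. That is \emph{not} literally the right-hand side of \eqref{xx}, which carries the distinct ratios $|f'(a)/f'(b)|^{s/2}$ and $|f'(b)/f'(a)|^{s/2}$. The paper's proof inserts an additional inequality at this stage, replacing $|f'(a)f'(b)|^{sq/2}$ by $|f'(a)/f'(b)|^{sq/2}$ in the first term (valid because $0<|f'(b)|<1$) and symmetrically in the second. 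Your bound is actually sharper, but if you want to land on \eqref{xx} exactly you need this extra trivial estimate.
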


\begin{proof}
Since $\left\vert f^{\prime }\right\vert ^{q}$ is a $s$-geometrically convex
and monotonically decreasing on $\left[ a,b\right] $, from Lemma \ref{l1}
and the well known power mean integral inequality, we have 
\begin{eqnarray*}
&&\left\vert \frac{f\left( a\right) +f\left( b\right) }{2}-\frac{1}{b-a}%
\int_{a}^{b}f\left( x\right) dx\right\vert \\
&\leq &\frac{b-a}{4}\left[ \int_{0}^{1}\left\vert -t\right\vert \left\vert
f^{\prime }\left( \frac{1+t}{2}a+\frac{1-t}{2}b\right) \right\vert
dt+\int_{0}^{1}\left\vert t\right\vert \left\vert f^{\prime }\left( \frac{1+t%
}{2}b+\frac{1-t}{2}a\right) \right\vert dt\right] \\
&\leq &\frac{b-a}{4}\left\{ \left[ \int_{0}^{1}tdt\right] ^{1-\frac{1}{q}}%
\left[ \int_{0}^{1}t\left\vert f^{\prime }\left( \frac{1+t}{2}a+\frac{1-t}{2}%
b\right) \right\vert ^{q}dt\right] ^{\frac{1}{q}}\right. \\
&&\left. +\left[ \int_{0}^{1}tdt\right] ^{1-\frac{1}{q}}\left[
\int_{0}^{1}t\left\vert f^{\prime }\left( \frac{1+t}{2}b+\frac{1-t}{2}%
a\right) \right\vert ^{q}dt\right] ^{\frac{1}{q}}\right\} \\
&\leq &\frac{b-a}{4}\left( \frac{1}{2}\right) ^{1-\frac{1}{q}}\left\{ \left[
\int_{0}^{1}t\left\vert f^{\prime }\left( a^{\frac{1+t}{2}}b^{\frac{1-t}{2}%
}\right) \right\vert ^{q}dt\right] ^{\frac{1}{q}}+\left[ \int_{0}^{1}\left%
\vert f^{\prime }\left( b^{\frac{1+t}{2}}a^{\frac{1-t}{2}}\right)
\right\vert ^{q}dt\right] ^{\frac{1}{q}}\right\} \\
&\leq &\frac{b-a}{4}\left( \frac{1}{2}\right) ^{1-\frac{1}{q}}\left\{ \left[
\int_{0}^{1}t\left\vert f^{\prime }\left( a\right) ^{q\left( \frac{1+t}{2}%
\right) ^{s}}f^{\prime }\left( b\right) ^{q\left( \frac{1-t}{2}\right)
^{s}}\right\vert dt\right] ^{\frac{1}{q}}\right. \\
&&+\left. \left[ \int_{0}^{1}t\left\vert f^{\prime }\left( b\right)
^{q\left( \frac{1+t}{2}\right) ^{s}}f^{\prime }\left( a\right) ^{q\left( 
\frac{1-t}{2}\right) ^{s}}\right\vert dt\right] ^{\frac{1}{q}}\right\}
\end{eqnarray*}%
When $\left\vert f^{\prime }\left( a\right) \right\vert =\left\vert
f^{\prime }\left( b\right) \right\vert =1,$ by( \ref{1}), we get%
\begin{equation*}
\left\vert \frac{f\left( a\right) +f\left( b\right) }{2}-\frac{1}{b-a}%
\int_{a}^{b}f\left( x\right) dx\right\vert \leq \frac{b-a}{4}
\end{equation*}%
When $0<\left\vert f^{\prime }\left( a\right) \right\vert ,\left\vert
f^{\prime }\left( b\right) \right\vert <1,$ by( \ref{1}), we get%
\begin{eqnarray*}
&&\left\vert \frac{f\left( a\right) +f\left( b\right) }{2}-\frac{1}{b-a}%
\int_{a}^{b}f\left( x\right) dx\right\vert \\
&\leq &\frac{b-a}{4}\left( \frac{1}{2}\right) ^{1-\frac{1}{q}}\left\{ \left[
\int_{0}^{1}t\left\vert f^{\prime }\left( a\right) ^{sq\left( \frac{1+t}{2}%
\right) }f^{\prime }\left( b\right) ^{sq\left( \frac{1-t}{2}\right)
}\right\vert dt\right] ^{\frac{1}{q}}\right. \\
&&+\left. \left[ \int_{0}^{1}t\left\vert f^{\prime }\left( b\right)
^{sq\left( \frac{1+t}{2}\right) }f^{\prime }\left( a\right) ^{sq\left( \frac{%
1-t}{2}\right) }\right\vert dt\right] ^{\frac{1}{q}}\right\} \\
&\leq &\frac{b-a}{4}\left( \frac{1}{2}\right) ^{1-\frac{1}{q}}\left\{ \left[
\left\vert \frac{f^{\prime }\left( a\right) }{f^{\prime }\left( b\right) }%
\right\vert ^{\frac{sq}{2}}\int_{0}^{1}t\left\vert \frac{f^{\prime }\left(
a\right) }{f^{\prime }\left( b\right) }\right\vert ^{\frac{sq}{2}t}dt\right]
^{\frac{1}{q}}+\left[ \left\vert \frac{f^{\prime }\left( b\right) }{%
f^{\prime }\left( a\right) }\right\vert ^{\frac{sq}{2}}\int_{0}^{1}t\left%
\vert \frac{f^{\prime }\left( b\right) }{f^{\prime }\left( a\right) }%
\right\vert ^{\frac{sq}{2}t}dt\right] ^{\frac{1}{q}}\right\} \\
&=&\frac{b-a}{4}\left( \frac{1}{2}\right) ^{1-\frac{1}{q}}\left\{ \left\vert 
\frac{f^{\prime }\left( a\right) }{f^{\prime }\left( b\right) }\right\vert ^{%
\frac{s}{2}}\left[ \int_{0}^{1}t\left\vert \frac{f^{\prime }\left( a\right) 
}{f^{\prime }\left( b\right) }\right\vert ^{\frac{sq}{2}t}dt\right] ^{\frac{1%
}{q}}+\left\vert \frac{f^{\prime }\left( b\right) }{f^{\prime }\left(
a\right) }\right\vert ^{\frac{s}{2}}\left[ \int_{0}^{1}t\left\vert \frac{%
f^{\prime }\left( b\right) }{f^{\prime }\left( a\right) }\right\vert ^{\frac{%
sq}{2}t}dt\right] ^{\frac{1}{q}}\right\} \\
&=&\frac{b-a}{4}\left( \frac{1}{2}\right) ^{1-\frac{1}{q}}\left\{ \left\vert 
\frac{f^{\prime }\left( a\right) }{f^{\prime }\left( b\right) }\right\vert ^{%
\frac{s}{2}}\left[ g_{1}\left( \alpha \left( \frac{sq}{2},\frac{sq}{2}%
\right) \right) \right] ^{\frac{1}{q}}+\left\vert \frac{f^{\prime }\left(
b\right) }{f^{\prime }\left( a\right) }\right\vert ^{\frac{s}{2}}\left[
g_{1}\left( \alpha \left( \frac{-sq}{2},\frac{-sq}{2}\right) \right) \right]
^{\frac{1}{q}}\right\}
\end{eqnarray*}

\textit{which completes the proof.}
\end{proof}

\begin{theorem}
\label{t4}Let $f:I\subset 
\mathbb{R}
_{+}\rightarrow 
\mathbb{R}
_{+}$ be differentiable on $I^{\circ },$ $a,b\in I$, with $a<b$ and $%
f^{\prime }\in L\left( \left[ a,b\right] \right) .$ If $\left\vert f^{\prime
}\right\vert $ is $s$-geometrically convex and monotonically decreasing on $%
\left[ a,b\right] $ for $\mu _{1},\mu _{2},\eta _{1},\eta _{2}>0$ with $\mu
_{1}+\eta _{1}=1$ and $\mu _{2}+\eta _{2}=1$ and $s\in \left( 0,1\right] ,$
then%
\begin{eqnarray}
&&\left\vert \frac{f\left( a\right) +f\left( b\right) }{2}-\frac{1}{b-a}%
\int_{a}^{b}f\left( x\right) dx\right\vert  \label{123} \\
&\leq &\frac{b-a}{4}\left\vert f^{\prime }\left( a\right) f^{\prime }\left(
b\right) \right\vert ^{\frac{s}{2}}\left\{ \frac{\left( 1+\mu _{2}\right)
\mu _{1}^{2}+\left( 1+\mu _{1}\right) \mu _{2}^{2}}{\left( 1+\mu _{1}\right)
\left( 1+\mu _{2}\right) }\right.  \notag \\
&&+\left. \eta _{1}g_{2}\left( \alpha \left( \frac{s}{2\eta _{1}},\frac{s}{%
2\eta _{1}}\right) \right) +\eta _{2}g_{2}\left( \alpha \left( \frac{s}{%
2\eta _{2}},\frac{s}{2\eta _{2}}\right) \right) \right\} .  \notag
\end{eqnarray}
\end{theorem}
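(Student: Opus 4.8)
The plan is to begin exactly as in the proofs of Theorems~\ref{t1}--\ref{t3}: use the identity of Lemma~\ref{l1}, pass to absolute values, and write $\left\vert \pm t\right\vert =t$. Because $\left\vert f^{\prime }\right\vert $ is monotonically decreasing and, by the weighted arithmetic--geometric mean inequality, $\frac{1+t}{2}a+\frac{1-t}{2}b\geq a^{\frac{1+t}{2}}b^{\frac{1-t}{2}}$ (and symmetrically after interchanging $a$ and $b$), each argument of $f^{\prime }$ can be replaced by the corresponding geometric mean, and then $s$-geometric convexity gives
\[
\left\vert f^{\prime }\left( a^{\frac{1+t}{2}}b^{\frac{1-t}{2}}\right) \right\vert \leq \left\vert f^{\prime }\left( a\right) \right\vert ^{\left( \frac{1+t}{2}\right) ^{s}}\left\vert f^{\prime }\left( b\right) \right\vert ^{\left( \frac{1-t}{2}\right) ^{s}},
\]
and likewise for the second integrand. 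I would then split into two cases, as in the earlier proofs. If $\left\vert f^{\prime }\left( a\right) \right\vert =\left\vert f^{\prime }\left( b\right) \right\vert =1$, the left side of \eqref{123} is at once $\leq \frac{b-a}{4}$, while the right side is $\geq \frac{b-a}{4}$ since $g_{2}\left( 1\right) =1$ and $\frac{\mu _{i}^{2}}{1+\mu _{i}}+\eta _{i}=\frac{1}{1+\mu _{i}}\geq \frac{1}{2}$ for $i=1,2$. If $0<\left\vert f^{\prime }\left( a\right) \right\vert ,\left\vert f^{\prime }\left( b\right) \right\vert <1$, apply inequality \eqref{1} with $m=\frac{1\pm t}{2}\leq 1$ and $n=s\leq 1$ to replace the exponents $\left( \frac{1\pm t}{2}\right) ^{s}$ by $s\cdot \frac{1\pm t}{2}$; collecting terms, the two integrals are then dominated by
\[
\frac{b-a}{4}\left\vert f^{\prime }\left( a\right) f^{\prime }\left( b\right) \right\vert ^{\frac{s}{2}}\left[ \int_{0}^{1}t\left\vert \tfrac{f^{\prime }\left( a\right) }{f^{\prime }\left( b\right) }\right\vert ^{\frac{st}{2}}dt+\int_{0}^{1}t\left\vert \tfrac{f^{\prime }\left( b\right) }{f^{\prime }\left( a\right) }\right\vert ^{\frac{st}{2}}dt\right] .
\]

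The new ingredient --- the step where the argument departs from Theorem~\ref{t1}, which evaluates $\int_{0}^{1}tc^{st/2}\,dt$ exactly and produces $g_{1}$ --- is to \emph{decouple} the linear factor $t$ from the exponential factor in each integrand by Young's inequality (equivalently, the weighted arithmetic--geometric mean inequality): for $u,v\geq 0$ and $\mu ,\eta >0$ with $\mu +\eta =1$ one has $uv\leq \mu u^{1/\mu }+\eta v^{1/\eta }$. Applied to the first integrand with $u=t$, $v=\left\vert f^{\prime }\left( a\right) /f^{\prime }\left( b\right) \right\vert ^{st/2}$ and weights $\mu _{1},\eta _{1}$, it gives
\[
t\left\vert \tfrac{f^{\prime }\left( a\right) }{f^{\prime }\left( b\right) }\right\vert ^{\frac{st}{2}}\leq \mu _{1}t^{1/\mu _{1}}+\eta _{1}\left\vert \tfrac{f^{\prime }\left( a\right) }{f^{\prime }\left( b\right) }\right\vert ^{\frac{st}{2\eta _{1}}},
\]
and the second integrand is treated the same way after interchanging $a$ and $b$, with weights $\mu _{2},\eta _{2}$.

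Finally I would integrate term by term. One has $\mu _{1}\int_{0}^{1}t^{1/\mu _{1}}\,dt=\frac{\mu _{1}^{2}}{1+\mu _{1}}$, and, directly from \eqref{111} and \eqref{113}, $\int_{0}^{1}\left\vert f^{\prime }\left( a\right) /f^{\prime }\left( b\right) \right\vert ^{\frac{st}{2\eta _{1}}}\,dt=g_{2}\!\left( \alpha \!\left( \tfrac{s}{2\eta _{1}},\tfrac{s}{2\eta _{1}}\right) \right)$, the subcase $\left\vert f^{\prime }\left( a\right) \right\vert =\left\vert f^{\prime }\left( b\right) \right\vert $ matching the $\alpha =1$ branch of $g_{2}$; the analogous computation for the second integral contributes $\frac{\mu _{2}^{2}}{1+\mu _{2}}+\eta _{2}\,g_{2}\!\left( \alpha \!\left( \tfrac{s}{2\eta _{2}},\tfrac{s}{2\eta _{2}}\right) \right)$, its reciprocal ratio $\left\vert f^{\prime }\left( b\right) /f^{\prime }\left( a\right) \right\vert $ being rewritten through \eqref{111}. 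Adding the two estimates and combining the fractions via $\frac{\mu _{1}^{2}}{1+\mu _{1}}+\frac{\mu _{2}^{2}}{1+\mu _{2}}=\frac{\left( 1+\mu _{2}\right) \mu _{1}^{2}+\left( 1+\mu _{1}\right) \mu _{2}^{2}}{\left( 1+\mu _{1}\right) \left( 1+\mu _{2}\right) }$ yields \eqref{123}. The only real obstacle I foresee is organizational: picking in Young's inequality the pair $(u,v)$ and the weights so that the power of $t$ integrates to a closed form while the exponential integral lands precisely on $g_{2}$ with the arguments displayed in \eqref{123}, and keeping the sign conventions of $\alpha \left( \cdot ,\cdot \right) $ consistent between the two integrals; no deeper difficulty is anticipated.
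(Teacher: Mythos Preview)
Your proposal is correct and follows essentially the same route as the paper: start from Lemma~\ref{l1}, use monotonicity together with the arithmetic--geometric mean inequality and $s$-geometric convexity, reduce the exponents via \eqref{1} when $0<\left\vert f^{\prime }(a)\right\vert ,\left\vert f^{\prime }(b)\right\vert <1$, and then apply Young's inequality $mn\leq \mu m^{1/\mu }+\eta n^{1/\eta }$ to each integrand to split off $\mu _{i}\int_{0}^{1}t^{1/\mu _{i}}\,dt=\frac{\mu _{i}^{2}}{1+\mu _{i}}$ and an exponential integral evaluating to $g_{2}$. Your added remark justifying the case $\left\vert f^{\prime }(a)\right\vert =\left\vert f^{\prime }(b)\right\vert =1$ (that the right side of \eqref{123} is $\geq \frac{b-a}{4}$) is a small improvement over the paper, which records only the resulting bound.
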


\begin{proof}
Since $\left\vert f^{\prime }\right\vert $ is a $s$-geometrically convex and
monotonically decreasing on $\left[ a,b\right] $, from Lemma \ref{l1}, we
have%
\begin{eqnarray*}
&&\left\vert \frac{f\left( a\right) +f\left( b\right) }{2}-\frac{1}{b-a}%
\int_{a}^{b}f\left( x\right) dx\right\vert \\
&\leq &\frac{b-a}{4}\left[ \int_{0}^{1}\left\vert -t\right\vert \left\vert
f^{\prime }\left( \frac{1+t}{2}a+\frac{1-t}{2}b\right) \right\vert
dt+\int_{0}^{1}\left\vert t\right\vert \left\vert f^{\prime }\left( \frac{1+t%
}{2}b+\frac{1-t}{2}a\right) \right\vert dt\right] \\
&\leq &\frac{b-a}{4}\left[ \int_{0}^{1}\left\vert -t\right\vert \left\vert
f^{\prime }\left( a^{\frac{1+t}{2}}b^{\frac{1-t}{2}}\right) \right\vert
dt+\int_{0}^{1}\left\vert t\right\vert \left\vert f^{\prime }\left( b^{\frac{%
1+t}{2}}a^{\frac{1-t}{2}}\right) \right\vert dt\right] \\
&\leq &\frac{b-a}{4}\left[ \int_{0}^{1}\left\vert -t\right\vert \left\vert
f^{\prime }\left( a\right) \right\vert ^{\left( \frac{1+t}{2}\right)
^{s}}\left\vert f^{\prime }\left( b\right) \right\vert ^{\left( \frac{1-t}{2}%
\right) ^{s}}dt+\int_{0}^{1}\left\vert t\right\vert \left\vert f^{\prime
}\left( b\right) \right\vert ^{\left( \frac{1+t}{2}\right) ^{s}}\left\vert
f^{\prime }\left( a\right) \right\vert ^{\left( \frac{1-t}{2}\right) ^{s}}dt%
\right]
\end{eqnarray*}%
When $0<\left\vert f^{\prime }\left( a\right) \right\vert ,\left\vert
f^{\prime }\left( b\right) \right\vert \leq 1,$ by( \ref{1}), we get 
\begin{eqnarray}
&&  \label{k} \\
&&\left\vert \frac{f\left( a\right) +f\left( b\right) }{2}-\frac{1}{b-a}%
\int_{a}^{b}f\left( x\right) dx\right\vert  \notag \\
&\leq &\frac{b-a}{4}\left[ \int_{0}^{1}\left\vert -t\right\vert \left\vert
f^{\prime }\left( a\right) \right\vert ^{s\left( \frac{1+t}{2}\right)
}\left\vert f^{\prime }\left( b\right) \right\vert ^{s\left( \frac{1-t}{2}%
\right) }dt+\int_{0}^{1}\left\vert t\right\vert \left\vert f^{\prime }\left(
b\right) \right\vert ^{s\left( \frac{1+t}{2}\right) }\left\vert f^{\prime
}\left( a\right) \right\vert ^{s\left( \frac{1-t}{2}\right) }dt\right] 
\notag \\
&=&\frac{b-a}{4}\left\vert f^{\prime }\left( a\right) f^{\prime }\left(
b\right) \right\vert ^{\frac{s}{2}}\left[ \int_{0}^{1}\left\vert
-t\right\vert \left\vert \frac{f^{\prime }\left( a\right) }{f^{\prime
}\left( b\right) }\right\vert ^{\frac{st}{2}}dt+\int_{0}^{1}\left\vert
t\right\vert \left\vert \frac{f^{\prime }\left( b\right) }{f^{\prime }\left(
a\right) }\right\vert ^{\frac{st}{2}}dt\right]  \notag
\end{eqnarray}%
for all $t\in \left[ 0,1\right] .$ Using the well known inequality $mn\leq
\mu m^{\frac{1}{\mu }}+\eta n^{\frac{1}{\eta }},$ on the right side of (\ref%
{k}), we get%
\begin{eqnarray*}
&&\left\vert \frac{f\left( a\right) +f\left( b\right) }{2}-\frac{1}{b-a}%
\int_{a}^{b}f\left( x\right) dx\right\vert \\
&\leq &\frac{b-a}{4}\left\vert f^{\prime }\left( a\right) f^{\prime }\left(
b\right) \right\vert ^{\frac{s}{2}}\left\{ \left\vert -t\right\vert
\left\vert \frac{f^{\prime }\left( a\right) }{f^{\prime }\left( b\right) }%
\right\vert ^{\frac{st}{2}}dt+\int_{0}^{1}\left\vert t\right\vert \left\vert 
\frac{f^{\prime }\left( b\right) }{f^{\prime }\left( a\right) }\right\vert ^{%
\frac{st}{2}}dt\right\} \\
&\leq &\frac{b-a}{4}\left\vert f^{\prime }\left( a\right) f^{\prime }\left(
b\right) \right\vert ^{\frac{s}{2}}\left\{ \mu _{1}\int_{0}^{1}\left\vert
-t\right\vert ^{\frac{1}{\mu _{1}}}dt+\eta _{1}\int_{0}^{1}\left\vert \frac{%
f^{\prime }\left( a\right) }{f^{\prime }\left( b\right) }\right\vert ^{\frac{%
s}{2\eta _{1}}t}dt\right. \\
&&+\left. \mu _{2}\int_{0}^{1}\left\vert t\right\vert ^{\frac{1}{\mu _{2}}%
}dt+\eta _{2}\int_{0}^{1}\left\vert \frac{f^{\prime }\left( b\right) }{%
f^{\prime }\left( a\right) }\right\vert ^{\frac{s}{2\eta _{2}}t}dt\right\} \\
&=&\frac{b-a}{4}\left\vert f^{\prime }\left( a\right) f^{\prime }\left(
b\right) \right\vert ^{\frac{s}{2}}\left[ \frac{\mu _{1}^{2}}{1+\mu _{1}}%
+\eta _{1}\int_{0}^{1}\left\vert \frac{f^{\prime }\left( a\right) }{%
f^{\prime }\left( b\right) }\right\vert ^{\frac{s}{2\eta _{1}}t}dt+\frac{\mu
_{2}^{2}}{1+\mu _{2}}+\eta _{2}\int_{0}^{1}\left\vert \frac{f^{\prime
}\left( b\right) }{f^{\prime }\left( a\right) }\right\vert ^{\frac{s}{2\eta
_{2}}t}dt\right] \\
&=&\frac{b-a}{4}\left\vert f^{\prime }\left( a\right) f^{\prime }\left(
b\right) \right\vert ^{\frac{s}{2}}\left\{ \frac{\mu _{1}^{2}}{1+\mu _{1}}%
+\eta _{1}g_{2}\left( \alpha \left( \frac{s}{2\eta _{1}},\frac{s}{2\eta _{1}}%
\right) \right) \right. \\
&&+\left. \frac{\mu _{2}^{2}}{1+\mu _{2}}+\eta _{2}g_{2}\left( \alpha \left( 
\frac{s}{2\eta _{2}},\frac{s}{2\eta _{2}}\right) \right) \right\} \\
&=&\frac{b-a}{4}\left\vert f^{\prime }\left( a\right) f^{\prime }\left(
b\right) \right\vert ^{\frac{s}{2}}\left\{ \frac{\left( 1+\mu _{2}\right)
\mu _{1}^{2}+\left( 1+\mu _{1}\right) \mu _{2}^{2}}{\left( 1+\mu _{1}\right)
\left( 1+\mu _{2}\right) }\right. \\
&&+\left. \eta _{1}g_{2}\left( \alpha \left( \frac{s}{2\eta _{1}},\frac{s}{%
2\eta _{1}}\right) \right) +\eta _{2}g_{2}\left( \alpha \left( \frac{s}{%
2\eta _{2}},\frac{s}{2\eta _{2}}\right) \right) \right\}
\end{eqnarray*}%
and we get, in here, if $\left\vert f^{\prime }\left( a\right) \right\vert
=\left\vert f^{\prime }\left( b\right) \right\vert =1,$ we get%
\begin{equation*}
\left\vert \frac{f\left( a\right) +f\left( b\right) }{2}-\frac{1}{b-a}%
\int_{a}^{b}f\left( x\right) dx\right\vert \leq \frac{b-a}{4}\left[ \frac{%
\left( 1+\mu _{2}\right) \mu _{1}^{2}+\left( 1+\mu _{1}\right) \mu _{2}^{2}}{%
\left( 1+\mu _{1}\right) \left( 1+\mu _{2}\right) }+\eta _{1}+\eta _{2}%
\right]
\end{equation*}%
which the proof is completed.
\end{proof}

\section{Applications to special means for positive numbers}

Let%
\begin{eqnarray*}
A\left( a,b\right) &=&\frac{a+b}{2},\text{ }L\left( a,b\right) =\frac{b-a}{%
\ln b-\ln a}\text{ \ \ \ \ }\left( a\neq b\right) , \\
L_{p}\left( a,b\right) &=&\left( \frac{b^{p+1}-a^{p+1}}{\left( p+1\right)
\left( b-a\right) }\right) ^{1/p},\text{ }a\neq b,\text{ }p\in 
\mathbb{R}
,\text{ }p\neq -1,0
\end{eqnarray*}%
be the arithmetic, logarithmic, generalized logarithmic means for $a,b>0$
respectively.

In the following propositions, $\alpha \left( u,v\right) =\frac{\left\vert
f^{\prime }\left( a\right) \right\vert ^{u}}{\left\vert f^{\prime }\left(
b\right) \right\vert ^{v}}=\frac{\left\vert a^{s-1}\right\vert ^{u}}{%
\left\vert b^{s-1}\right\vert ^{v}}$. \ \ 

\begin{proposition}
Let $0<a<b\leq 1,$ with $a\neq b,$ and $0<s<1.$ Then, we have%
\begin{eqnarray*}
&&\frac{1}{s}\left\vert A\left( a^{s},b^{s}\right) -L_{s}\left( a,b\right)
^{s}\right\vert \\
&\leq &\frac{\left( b-a\right) }{4}\left( ab\right) ^{\frac{s}{2}\left(
s-1\right) }\left\{ \frac{\left\vert \frac{a}{b}\right\vert ^{\left(
s-1\right) \frac{s}{2}}\ln \left\vert \frac{a}{b}\right\vert ^{\left(
s-1\right) \frac{s}{2}}-\left\vert \frac{a}{b}\right\vert ^{\left(
s-1\right) \frac{s}{2}}+1}{\left( \ln \left\vert \frac{a}{b}\right\vert
^{\left( s-1\right) \frac{s}{2}}\right) ^{2}}\right. \\
&&+\left. \frac{\left\vert \frac{a}{b}\right\vert ^{-\left( s-1\right) \frac{%
s}{2}}\ln \left\vert \frac{a}{b}\right\vert ^{-\left( s-1\right) \frac{s}{2}%
}-\left\vert \frac{a}{b}\right\vert ^{-\left( s-1\right) \frac{s}{2}}+1}{%
\left( \ln \left\vert \frac{a}{b}\right\vert ^{-\left( s-1\right) \frac{s}{2}%
}\right) ^{2}}\right\}
\end{eqnarray*}
\end{proposition}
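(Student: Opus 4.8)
The plan is to apply Theorem \ref{t1} directly to the specific function $f(x)=x^{s}/s$ on the interval $[a,b]\subset(0,1]$, and then to identify every ingredient of the resulting inequality \eqref{b} with the means $A$ and $L_{s}$ in the statement. First I would check the hypotheses of Theorem \ref{t1} for this $f$. Since $0<a<b\le 1$ and $0<s<1$, the map $f(x)=x^{s}/s$ sends $(0,1]$ into $(0,\infty)$ and is differentiable with $f'(x)=x^{s-1}$; because $a>0$, $|f'|$ is bounded on $[a,b]$, so $f'\in L[a,b]$. By the Example in the Introduction (taken with $q=1$), $|f'(x)|=x^{s-1}$ is $s$-geometrically convex on $(0,1]$, and since $s-1<0$ it is monotonically decreasing there. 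Hence Theorem \ref{t1} is applicable.

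Next I would compute the three quantities entering the left-hand side of \eqref{b}. One has $\dfrac{f(a)+f(b)}{2}=\dfrac{a^{s}+b^{s}}{2s}=\dfrac{1}{s}A\!\left(a^{s},b^{s}\right)$, and $\displaystyle\int_{a}^{b}f(x)\,dx=\frac{1}{s}\int_{a}^{b}x^{s}\,dx=\frac{b^{s+1}-a^{s+1}}{s(s+1)}$, so that $\dfrac{1}{b-a}\displaystyle\int_{a}^{b}f(x)\,dx=\dfrac{1}{s}\cdot\dfrac{b^{s+1}-a^{s+1}}{(s+1)(b-a)}=\dfrac{1}{s}L_{s}(a,b)^{s}$. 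Therefore the left-hand side of \eqref{b} equals $\dfrac{1}{s}\left|A\!\left(a^{s},b^{s}\right)-L_{s}(a,b)^{s}\right|$, which is exactly the left-hand side of the asserted inequality.

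Then I would simplify the right-hand side of \eqref{b}. With $f'(x)=x^{s-1}$ on $(0,1]$ one has $\alpha(u,v)=|f'(a)|^{u}|f'(b)|^{-v}=|a^{s-1}|^{u}/|b^{s-1}|^{v}$, so $\alpha\!\left(\tfrac{s}{2},\tfrac{s}{2}\right)=|a/b|^{(s-1)s/2}$ and $\alpha\!\left(-\tfrac{s}{2},-\tfrac{s}{2}\right)=|a/b|^{-(s-1)s/2}$, while $|f'(a)f'(b)|^{s/2}=(ab)^{\frac{s}{2}(s-1)}$. Since $a\ne b$ and $s\ne 0,1$, the logarithms $\ln|a/b|^{\pm(s-1)s/2}$ are nonzero, so the $\alpha\ne1$ branch of \eqref{112} is used; substituting these values of $\alpha$ into the definition of $g_{1}$ and expanding reproduces verbatim the bracketed expression displayed in the Proposition. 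Combining this with the previous paragraph gives the claimed bound.

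The argument is entirely a substitution exercise, and I expect no genuine obstacle. The only points deserving a moment's attention are verifying that $|f'|$ is indeed $s$-geometrically convex and decreasing on $[a,b]$ (covered by the cited Example), and carefully tracking the factor $1/s$ coming from dividing $f$ and $\int_{a}^{b}f$ by $s$, which is what produces the coefficient $\tfrac1s$ on the left and turns $\int x^{s}$ into the $L_{s}^{s}$ term.
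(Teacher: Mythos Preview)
Your proposal is correct and follows exactly the paper's approach: apply Theorem~\ref{t1} to $f(x)=x^{s}/s$ on $(0,1]$, use the Example to verify that $|f'(x)|=x^{s-1}$ is $s$-geometrically convex and decreasing, and then identify both sides of \eqref{b} with the stated means. The paper's own proof is in fact far terser than yours, merely asserting that the result follows from Theorem~\ref{t1}; your added computations of the left-hand side as $\tfrac{1}{s}\lvert A(a^{s},b^{s})-L_{s}(a,b)^{s}\rvert$ and of $\alpha(\pm s/2,\pm s/2)$ and $g_{1}$ on the right are exactly what is needed to flesh it out.
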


\begin{proof}
Let $f\left( x\right) =\frac{x^{s}}{s},$ $x\in \left( 0,1\right] ,$ $0<s<1,$
then $\left\vert f^{\prime }\left( x\right) \right\vert =x^{s-1},$ $x\in
\left( 0,1\right] $ is a $s$-geometrically convex mapping. The assertion
follows from Theorem \ref{t1} applied to $s$-geometrically convex mapping $%
\left\vert f^{\prime }\left( x\right) \right\vert =x^{s-1},$ $x\in \left( 0,1%
\right] .$
\end{proof}

\begin{example}
Let $f\left( x\right) =\frac{x^{s}}{s},$ $x\in \left( 0,1\right] ,$ $0<s<1,$
then $\left\vert f^{\prime }\left( x\right) \right\vert =x^{s-1},$ $x\in
\left( 0,1\right] $ is a $s$-geometrically convex mapping. If we apply in
Theorem \ref{t1}, for $s=0.5,a=0.89,b=0.9,$ we get%
\begin{eqnarray*}
&&\frac{1}{s}\left\vert \frac{a^{s}+b^{s}}{2}-\left( \frac{b^{s+1}-a^{s+1}}{%
\left( s+1\right) \left( b-a\right) }\right) \right\vert \\
&=&4.\,\allowbreak 921\,067\,116\times 10^{-6}\allowbreak \\
&\leq &\frac{\left( b-a\right) }{4}\left\vert ab\right\vert ^{\frac{s}{2}%
\left( s-1\right) } \\
&&\times \left( \frac{\left\vert \frac{a}{b}\right\vert ^{\left( s-1\right) 
\frac{s}{2}}\ln \left\vert \frac{a}{b}\right\vert ^{\left( s-1\right) \frac{s%
}{2}}-\left\vert \frac{a}{b}\right\vert ^{\left( s-1\right) \frac{s}{2}}+1}{%
\left( \ln \left\vert \frac{a}{b}\right\vert ^{\left( s-1\right) \frac{s}{2}%
}\right) ^{2}}+\frac{\left\vert \frac{a}{b}\right\vert ^{-\left( s-1\right) 
\frac{s}{2}}\ln \left\vert \frac{a}{b}\right\vert ^{-\left( s-1\right) \frac{%
s}{2}}-\left\vert \frac{a}{b}\right\vert ^{-\left( s-1\right) \frac{s}{2}}+1%
}{\left( \ln \left\vert \frac{a}{b}\right\vert ^{-\left( s-1\right) \frac{s}{%
2}}\right) ^{2}}\right) \\
&=&2.\,\allowbreak 570\,313\,847\times 10^{-3}
\end{eqnarray*}%
And similarly, if we apply for $s=0.2,a=0.15,b=0.6,$ we obtain%
\begin{equation*}
9.\,\allowbreak 780\,804\,473\times 10^{-2}\allowbreak \leq
0.136\,819\,309\,\allowbreak 576\,863\,680\,\allowbreak 170\,486
\end{equation*}%
for $s=0.75,a=0.45,b=0.86,$ we obtain 
\begin{equation*}
6.\,\allowbreak 115\,413\,651\times 10^{-2}\allowbreak \leq
0.112\,144\,032\,\allowbreak 368\,736\,206\,\allowbreak 184\,243
\end{equation*}%
etc.
\end{example}

\begin{proposition}
Let $0<a<b\leq 1,$with $a\neq b,$ and $0<s<1,$ and $p,q>1.$ Then, we have%
\begin{eqnarray*}
&&\frac{1}{s}\left\vert A\left( a^{s},b^{s}\right) -L_{s}\left( a,b\right)
^{s}\right\vert \\
&\leq &\frac{\left( b-a\right) }{4\left( p+1\right) ^{\frac{1}{p}}}%
\left\vert ab\right\vert ^{\frac{s}{2}\left( s-1\right) }\left( b^{s\left(
1-s\right) /2}+a^{s\left( 1-s\right) /2}\right) \left[ L\left( a^{\left(
s-1\right) \frac{sq}{2}},b^{\left( s-1\right) \frac{sq}{2}}\right) \right] ^{%
\frac{1}{q}}.
\end{eqnarray*}
\end{proposition}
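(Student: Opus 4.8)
The plan is to specialize Theorem~\ref{t2} to the function $f(x)=x^{s}/s$ on an interval $[a,b]\subseteq(0,1]$ with $a<b$. Recall that $\left\vert f'(x)\right\vert =x^{s-1}$ and that $\left\vert f'(x)\right\vert ^{q}=x^{(s-1)q}$ is monotonically decreasing and $s$-geometrically convex on $(0,1]$ for $0<s<1$ and $q\geq 1$ (this is the content of the Example in the introductory section), so the hypotheses of Theorem~\ref{t2} are met for the conjugate pair $p,q>1$. The left-hand side is then computed directly: with $f(x)=x^{s}/s$,
\[
\frac{f(a)+f(b)}{2}-\frac{1}{b-a}\int_{a}^{b}f(x)\,dx
=\frac{1}{s}\left( \frac{a^{s}+b^{s}}{2}-\frac{b^{s+1}-a^{s+1}}{(s+1)(b-a)}\right)
=\frac{1}{s}\bigl( A(a^{s},b^{s})-L_{s}(a,b)^{s}\bigr),
\]
and taking absolute values gives exactly the quantity on the left of the asserted inequality.

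It remains to rewrite the right-hand side of (\ref{g}) in the claimed form. Since $\left\vert f'(x)\right\vert =x^{s-1}$ we have $\left\vert f'(a)f'(b)\right\vert ^{s/2}=(ab)^{\frac{s}{2}(s-1)}$, and, writing $c=(s-1)\frac{sq}{2}$ (note $c<0$ since $0<s<1$ and $q>1$), definition (\ref{111}) yields $\alpha\bigl(\tfrac{sq}{2},\tfrac{sq}{2}\bigr)=(a/b)^{c}$ and $\alpha\bigl(-\tfrac{sq}{2},-\tfrac{sq}{2}\bigr)=(b/a)^{c}$. The \emph{key} step is the elementary identity, valid for $a\neq b$,
\[
g_{2}\bigl((a/b)^{c}\bigr)=\frac{(a/b)^{c}-1}{\ln (a/b)^{c}}
=\frac{a^{c}-b^{c}}{c\,b^{c}(\ln a-\ln b)}
=\frac{1}{b^{c}}\,L\bigl( a^{c},b^{c}\bigr),
\]
together with its mirror image $g_{2}\bigl((b/a)^{c}\bigr)=a^{-c}\,L(a^{c},b^{c})$, each of which follows straight from definition (\ref{113}) and the definition of $L$. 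Substituting these, and using $-c/q=\frac{s(1-s)}{2}$ and $c=(s-1)\frac{sq}{2}$, one obtains
\[
\left[ g_{2}\Bigl( \alpha\bigl( \tfrac{sq}{2},\tfrac{sq}{2}\bigr)\Bigr)\right] ^{1/q}+\left[ g_{2}\Bigl( \alpha\bigl( -\tfrac{sq}{2},-\tfrac{sq}{2}\bigr)\Bigr)\right] ^{1/q}
=\bigl( b^{s(1-s)/2}+a^{s(1-s)/2}\bigr)\bigl[ L\bigl( a^{(s-1)sq/2},b^{(s-1)sq/2}\bigr)\bigr] ^{1/q}.
\]

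Assembling these pieces into (\ref{g}) produces the bound
\[
\frac{(b-a)}{4(p+1)^{1/p}}\,|ab|^{\frac{s}{2}(s-1)}\bigl( b^{s(1-s)/2}+a^{s(1-s)/2}\bigr)\bigl[ L\bigl( a^{(s-1)sq/2},b^{(s-1)sq/2}\bigr)\bigr] ^{1/q},
\]
which is precisely the right-hand side of the Proposition. I expect the only step requiring genuine care to be the reduction of $g_{2}$ evaluated at the power $(a/b)^{c}$ to the logarithmic mean $L(a^{c},b^{c})$ divided by $b^{c}$ (and likewise for the mirror term); everything else is substitution and exponent arithmetic.
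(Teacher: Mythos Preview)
Your proof is correct and follows the same approach as the paper: apply Theorem~\ref{t2} to $f(x)=x^{s}/s$ on $(0,1]$, compute the left-hand side in terms of $A$ and $L_{s}$, and identify the $g_{2}$-terms on the right with the logarithmic mean $L$. The paper's own proof is a single sentence deferring to Theorem~\ref{t2}; your version spells out the substitution and the reduction $g_{2}\bigl((a/b)^{c}\bigr)=b^{-c}L(a^{c},b^{c})$ explicitly, which is exactly the computation the paper leaves implicit.
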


\begin{proof}
\ The assertion follows from Theorem \ref{t2} applied to $s$-geometrically
convex mapping $\left\vert f^{\prime }\left( x\right) \right\vert =\frac{%
x^{s}}{s},$ $x\in \left( 0,1\right] .$
\end{proof}

\begin{proposition}
Let $0<a<b\leq 1,$ with $a\neq b,$ and $0<s<1,$ and $q\geq 1.$ Then, we have%
\begin{eqnarray*}
&&\frac{1}{s}\left\vert A\left( a^{s},b^{s}\right) -L_{s}\left( a,b\right)
^{s}\right\vert \\
&\leq &\frac{b-a}{4}\left( \frac{1}{2}\right) ^{1-\frac{1}{q}}\left\{
\left\vert \frac{a}{b}\right\vert ^{\frac{s}{2}\left( s-1\right) }\left[ 
\frac{\left\vert \frac{a}{b}\right\vert ^{\left( s-1\right) \frac{sq}{2}}\ln
\left\vert \frac{a}{b}\right\vert ^{\left( s-1\right) \frac{sq}{2}%
}-\left\vert \frac{a}{b}\right\vert ^{\left( s-1\right) \frac{sq}{2}}+1}{%
\left( \ln \left\vert \frac{a}{b}\right\vert ^{\left( s-1\right) \frac{sq}{2}%
}\right) ^{2}}\right] ^{\frac{1}{q}}\right. \\
&&+\left. \left\vert \frac{b}{a}\right\vert ^{\frac{s}{2}\left( s-1\right) }%
\left[ \frac{\left\vert \frac{a}{b}\right\vert ^{-\left( s-1\right) \frac{sq%
}{2}}\ln \left\vert \frac{a}{b}\right\vert ^{-\left( s-1\right) \frac{sq}{2}%
}+\left\vert \frac{a}{b}\right\vert ^{-\left( s-1\right) \frac{sq}{2}}-1}{%
\left( \ln \left\vert \frac{a}{b}\right\vert ^{-\left( s-1\right) \frac{sq}{2%
}}\right) ^{2}}\right] ^{\frac{1}{q}}\right\} .
\end{eqnarray*}
\end{proposition}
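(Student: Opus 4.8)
The plan is to specialize Theorem~\ref{t3} to the mapping $f(x)=x^{s}/s$ on $(0,1]$, exactly as in the preceding propositions. First I would record that $f$ is differentiable on $(0,1)$ with $f^{\prime}(x)=x^{s-1}$, so that $\left\vert f^{\prime}(x)\right\vert ^{q}=x^{(s-1)q}$; since $0<s<1$ and $q\geq 1$ we have $(s-1)q<0$, whence $\left\vert f^{\prime}\right\vert ^{q}$ is monotonically decreasing on $(0,1]$, and by the Example of \cite{zh} recalled in the Introduction (see \eqref{5}--\eqref{6}) it is $s$-geometrically convex on $(0,1]$. Since $[a,b]\subset (0,1]$, the derivative $f^{\prime}$ is continuous, hence in $L([a,b])$, so every hypothesis of Theorem~\ref{t3} is in force for $q\geq 1$ and $s\in(0,1)$.

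Next I would evaluate the left-hand side of \eqref{xx} for this $f$. One computes $\frac{f(a)+f(b)}{2}=\frac{a^{s}+b^{s}}{2s}=\frac{1}{s}\,A(a^{s},b^{s})$ and $\frac{1}{b-a}\int_{a}^{b}f(x)\,dx=\frac{1}{s(b-a)}\cdot\frac{b^{s+1}-a^{s+1}}{s+1}=\frac{1}{s}\,L_{s}(a,b)^{s}$, the last equality because $L_{s}(a,b)^{s}=\frac{b^{s+1}-a^{s+1}}{(s+1)(b-a)}$. Subtracting, the left side of \eqref{xx} becomes $\frac{1}{s}\left\vert A(a^{s},b^{s})-L_{s}(a,b)^{s}\right\vert$, which is precisely the left-hand side of the Proposition.

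It then remains to rewrite the right-hand side of \eqref{xx}. Because $\left\vert f^{\prime}(a)\right\vert =a^{s-1}$ and $\left\vert f^{\prime}(b)\right\vert =b^{s-1}$, we get $\left\vert f^{\prime}(a)/f^{\prime}(b)\right\vert ^{s/2}=\left\vert a/b\right\vert ^{\frac{s}{2}(s-1)}$, $\left\vert f^{\prime}(b)/f^{\prime}(a)\right\vert ^{s/2}=\left\vert b/a\right\vert ^{\frac{s}{2}(s-1)}$, and, from the definition \eqref{111}, $\alpha\left(\frac{sq}{2},\frac{sq}{2}\right)=\left\vert a/b\right\vert ^{(s-1)\frac{sq}{2}}$ and $\alpha\left(-\frac{sq}{2},-\frac{sq}{2}\right)=\left\vert a/b\right\vert ^{-(s-1)\frac{sq}{2}}$. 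Since $a\neq b$ and the exponents are nonzero, both of these values differ from $1$, so the $\alpha\neq 1$ branch of \eqref{112} applies; inserting them into $g_{1}(\alpha)=\dfrac{\alpha\ln\alpha-\alpha+1}{(\ln\alpha)^{2}}$ and substituting everything back into \eqref{xx} yields the stated bound.

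I do not anticipate any genuine obstacle: all the analysis — Lemma~\ref{l1}, the power-mean integral inequality, the elementary estimate \eqref{1}, and the passage to the functions $g_{1},g_{2}$ — is already carried out inside Theorem~\ref{t3}. The two points that need a little attention are the verification that $\left\vert f^{\prime}\right\vert ^{q}$ is $s$-geometrically convex and monotonically decreasing on $(0,1]$ (supplied by the Example) and the routine algebra that converts $\alpha(\pm sq/2,\pm sq/2)$ together with the factors $\left\vert f^{\prime}(a)/f^{\prime}(b)\right\vert ^{\pm s/2}$ into the explicit powers of $\left\vert a/b\right\vert$ appearing in the statement.
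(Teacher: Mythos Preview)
Your proposal is correct and follows exactly the paper's approach: the paper's own proof is the single sentence ``The assertion follows from Theorem~\ref{t3} applied to $s$-geometrically convex mapping $\left\vert f^{\prime}(x)\right\vert = x^{s}/s$, $x\in(0,1]$'' (with the obvious slip that $f(x)=x^{s}/s$ is meant), and you have simply spelled out the verification of the hypotheses and the substitution into \eqref{xx} in more detail.
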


\begin{proof}
The assertion follows from Theorem \ref{t3} applied to $s$-geometrically
convex mapping $\left\vert f^{\prime }\left( x\right) \right\vert =\frac{%
x^{s}}{s},$ $x\in \left( 0,1\right] .$
\end{proof}

\begin{proposition}
Let $0<a<b\leq 1,$ with $a\neq b,$ and $0<s<1,$ and $q\geq 1.$ Then, we have%
\begin{eqnarray*}
&&\frac{1}{s}\left\vert A\left( a^{s},b^{s}\right) -L_{s}\left( a,b\right)
^{s}\right\vert \\
&\leq &\frac{b-a}{4}\left\vert ab\right\vert ^{\frac{s}{2}\left( s-1\right) }
\\
&&\times \left[ \frac{\left( 1+\mu _{2}\right) \mu _{1}^{2}+\left( 1+\mu
_{1}\right) \mu _{2}^{2}}{\left( 1+\mu _{1}\right) \left( 1+\mu _{2}\right) }%
+\eta _{1}\left[ \frac{\left\vert \frac{a}{b}\right\vert ^{\left( s-1\right) 
\frac{s}{2\eta _{1}}}-1}{\ln \left\vert \frac{a}{b}\right\vert ^{\left(
s-1\right) \frac{s}{2\eta _{1}}}}\right] +\eta _{2}\left[ \frac{\left\vert 
\frac{a}{b}\right\vert ^{\left( s-1\right) \frac{s}{2\eta _{2}}}-1}{\ln
\left\vert \frac{a}{b}\right\vert ^{\left( s-1\right) \frac{s}{2\eta _{2}}}}%
\right] \right] .
\end{eqnarray*}
\end{proposition}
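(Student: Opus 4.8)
The plan is to apply Theorem~\ref{t4} directly to the specific mapping $f(x)=x^{s}/s$ on $(0,1]$, exactly as the preceding three propositions apply Theorems~\ref{t1}, \ref{t2}, \ref{t3}. First I would verify the hypotheses: for $0<s<1$ the function $f(x)=x^{s}/s$ has $|f'(x)|=x^{s-1}$, which is monotonically decreasing on $(0,1]$, and by the Example in the introduction $|f'|$ is $s$-geometrically convex on $(0,1]$; moreover $|f'(a)|,|f'(b)|\in(0,1]$ whenever $a,b\in(0,1]$ (with equality to $1$ only at the endpoint $1$), so the regime $0<|f'(a)|,|f'(b)|\le 1$ of Theorem~\ref{t4} is in force. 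With $0<a<b\le 1$ one also has $a<b$ as required, and $f'\in L([a,b])$ since $f'$ is continuous there.

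Next I would compute the left-hand side of \eqref{123} for this $f$. We have $f(a)=a^{s}/s$, $f(b)=b^{s}/s$, so $\frac{f(a)+f(b)}{2}=\frac{1}{s}A(a^{s},b^{s})$, and $\frac{1}{b-a}\int_{a}^{b}\frac{x^{s}}{s}\,dx=\frac{1}{s}\cdot\frac{b^{s+1}-a^{s+1}}{(s+1)(b-a)}=\frac{1}{s}L_{s}(a,b)^{s}$ by the definition of the generalized logarithmic mean with parameter $p=s$. Hence the left side of \eqref{123} becomes $\frac{1}{s}\bigl|A(a^{s},b^{s})-L_{s}(a,b)^{s}\bigr|$, matching the claimed expression.

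Then I would substitute $|f'(a)|=a^{s-1}$ and $|f'(b)|=b^{s-1}$ into the right-hand side of \eqref{123}. The prefactor $|f'(a)f'(b)|^{s/2}=(a^{s-1}b^{s-1})^{s/2}=|ab|^{\frac{s}{2}(s-1)}$. For the $\alpha$-terms, recall $\alpha(u,v)=|f'(a)|^{u}|f'(b)|^{-v}=|a^{s-1}|^{u}|b^{s-1}|^{-v}$, so with $u=v=\frac{s}{2\eta_{i}}$ we get $\alpha\!\left(\frac{s}{2\eta_{i}},\frac{s}{2\eta_{i}}\right)=\bigl|\tfrac{a}{b}\bigr|^{(s-1)\frac{s}{2\eta_{i}}}$. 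Feeding this into the definition \eqref{113} of $g_{2}$, the term $\eta_{i}g_{2}(\alpha(\cdot))$ becomes $\eta_{i}\cdot\dfrac{\bigl|\frac{a}{b}\bigr|^{(s-1)\frac{s}{2\eta_{i}}}-1}{\ln\bigl|\frac{a}{b}\bigr|^{(s-1)\frac{s}{2\eta_{i}}}}$ (noting that since $a\ne b$ we have $\alpha\ne 1$, so the second branch of $g_{2}$ applies), which is precisely the $i$-th bracketed summand in the claimed bound; one may also observe $g_{2}(\alpha(\frac{s}{2\eta_{i}},\frac{s}{2\eta_{i}}))=L\bigl(a^{(s-1)\frac{s}{2\eta_{i}}},b^{(s-1)\frac{s}{2\eta_{i}}}\bigr)\cdot\bigl|\tfrac{a}{b}\bigr|^{-(s-1)\frac{s}{2\eta_{i}}}\cdot$(scaling), but writing it in the $g_{2}$-form as above is the cleanest route. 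The first summand $\frac{(1+\mu_{2})\mu_{1}^{2}+(1+\mu_{1})\mu_{2}^{2}}{(1+\mu_{1})(1+\mu_{2})}$ carries over verbatim since it does not involve $f$. Assembling these pieces yields \eqref{123} in the stated form, completing the proof.

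I do not expect any genuine obstacle here: the proposition is a pure specialization, and the only care needed is bookkeeping — keeping the exponent $(s-1)\frac{s}{2\eta_{i}}$ straight (it is negative since $0<s<1$, so $\bigl|\frac{a}{b}\bigr|$ raised to it exceeds $1$ and the logarithm in the denominator of $g_{2}$ is likewise signed consistently, so the quotient is a well-defined positive quantity equal to $L$ of the two powers up to the obvious scaling), and confirming that the endpoint case $|f'(a)|=|f'(b)|=1$ (i.e. $a=b=1$) is excluded by the hypothesis $a\ne b$ so that only the main branch of the argument in Theorem~\ref{t4} is invoked.
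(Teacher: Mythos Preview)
Your approach is exactly the paper's: specialize Theorem~\ref{t4} to $f(x)=x^{s}/s$ on $(0,1]$, and your identification of the left side with $\frac{1}{s}\lvert A(a^{s},b^{s})-L_{s}(a,b)^{s}\rvert$ and your substitution $|f'(a)f'(b)|^{s/2}=(ab)^{\frac{s}{2}(s-1)}$, $\alpha(\tfrac{s}{2\eta_i},\tfrac{s}{2\eta_i})=\lvert a/b\rvert^{(s-1)\frac{s}{2\eta_i}}$ into $g_{2}$ are all correct.

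One factual slip in your hypothesis check: for $0<s<1$ and $x\in(0,1]$ the exponent $s-1$ is \emph{negative}, so $|f'(x)|=x^{s-1}\ge 1$, not $\le 1$ as you assert; thus the clause ``the regime $0<|f'(a)|,|f'(b)|\le 1$ of Theorem~\ref{t4} is in force'' is wrong (e.g.\ $a=0.5$, $s=0.5$ gives $|f'(a)|=\sqrt{2}$). This does not derail the specialization, since the \emph{statement} of Theorem~\ref{t4} carries no such restriction---that condition appears only inside its proof---and the paper's own one-line argument simply invokes the theorem as stated. But you should drop or correct that sentence.
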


\begin{proof}
\ The assertion follows from Theorem \ref{t4} applied to $s$-geometrically
convex mapping $\left\vert f^{\prime }\left( x\right) \right\vert =\frac{%
x^{s}}{s},$ $x\in \left( 0,1\right] .$
\end{proof}

\end{document}